\theoremstyle{plain}
\newtheorem*{theo*}{Theorem}
\newtheorem{theorem}{Theorem}[section]
\newtheorem{proposition}[theorem]{Proposition}
\newtheorem{lemma}[theorem]{Lemma}
\theoremstyle{remark}
\newtheorem{remark}[theorem]{Remark}
\newtheorem{example}[theorem]{Example}
\newcommand{\cf}{\ensuremath{\mathscr{F}}}
\newcommand{\ind}[1]{\mathbb{1}_{#1}}
\newcommand{\un}{\mathbb{1}}
\newcommand{\zero}{\mathbb{0}}
\newcommand{\cl}{\ensuremath{\mathscr{L}^\infty}}
\newcommand{\kkk}{{\ensuremath{\bm{\kk}}}}
\date{\today}
\author{Jean-François Delmas}
\address{Jean-François Delmas,
  CERMICS, \'{E}cole des Ponts, France}
\email{jean-francois.delmas@enpc.fr}
\author{Dylan Dronnier}
\address{Dylan Dronnier,
  Université de Neuchâtel, Switzerland}
\email{dylan.dronnier@unine.ch}
\author{Pierre-André Zitt}
\address{Pierre-André Zitt, LAMA, Université Gustave Eiffel, France}
\email{pierre-andre.zitt@univ-eiffel.fr}
\newcommand{\norm}[1]{\left\lVert\,#1\,\right\rVert}
\newcommand{\R}{\ensuremath{\mathbb{R}}}
\newcommand{\kk}{\ensuremath{\mathrm{k}}}
\newcommand{\Tinf}{\ensuremath{\mathcal{T}}}
\newcommand{\etae}{\ensuremath{\eta^\mathrm{equi}}} 
\newcommand{\etauc}{\ensuremath{\eta^\mathrm{uni}_\mathrm{crit}}}
\newcommand{\etau}{\ensuremath{\eta^\mathrm{uni}}} 
\newcommand{\etao}{\ensuremath{\eta^\mathrm{opt}}}
\title[Vaccinating according to the maximal endemic
equilibrium]{Vaccinating according to the maximal endemic equilibrium
  achieves herd immunity}
\begin{document}

\begin{abstract}
  We  consider  the  simple  epidemiological SIS  model  for  a  general heterogeneous
  population introduced by Lajmanovich and Yorke (1976) in finite  dimension,  and  its
  infinite  dimensional  generalization  we introduced in previous works.  In this model
  the basic reproducing number $R_0$ is given by the spectral radius of an integral
  operator. If $R_0>1$, then  there exists a  maximal endemic equilibrium. In this very
  general  heterogeneous SIS model, we prove that vaccinating  according   to  the 
  profile  of   this  maximal  endemic equilibrium ensures herd immunity. Moreover, this
  vaccination strategy is critical:  the resulting  effective reproduction number  is
  exactly equal to one. As an application, we estimate that if $R_0 = 2$ in an
  age-structured community with mixing rates fitted to social activity, applying this
  strategy would require approximately 29\% less vaccine doses than the strategy which
  consists in vaccinating uniformly a proportion $1 - 1/R_0$ of the population.

  From a dynamical systems point of view,  we prove  that  the  non-maximality of  an 
  equilibrium $g$  is equivalent to its linear instability  in the original dynamics, and
  to the  linear instability  of  the disease-free  state  in the  modified dynamics where
  we vaccinate according to~$g$. 
\end{abstract}

\maketitle

\section{Introduction}

Increasing  the prevalence  of  immunity from  contagious  disease in  a population limits
the circulation of the infection among the individuals who lack  immunity. This so-called 
``herd effect'' plays  a fundamental role  in epidemiology,  for example  it has  had a 
major impact  in the eradication  of  smallpox and  rinderpest  or  the near  eradication 
of poliomyelitis \cite{HerdImmunityFine2011}.  Our aim is to present a targeted 
vaccination  strategy  based   on  the  heterogeneity  of  the infection  spreading in 
the population  which allows  to eradicate  the epidemic.     We    consider    for  
simplicity    the deterministic infinite-dimensional SIS model (with S=Susceptible and
I=Infectious) and the effect of  a perfect vaccine.  However, we take  into account a very
general   model  for the heterogeneous  population   based  on   the
infinite-dimensional model introduced in
\cite{delmas_infinite-dimensional_2020},     that    encompasses     the
meta-population~SIS models  developed by Lajmanovich and  Yorke in their pioneer 
paper~\cite{lajmanovich1976deterministic}   or  SIS   model  on graphs. More   precisely, 
the   probability  $u_t(x)$   of  an individual of  type $x\in \Omega$  to be infected  at
time $t$  is the solution of the (infinite dimensional) ordinary differential equation:
\begin{equation}
  \label{eq:def-sis}
  \partial_t u_t(x) = (1 - u_t(x)) \int_\Omega k(x,y)\, u_t(y) \,
  \mu(\mathrm{d} y) - 
  \gamma(x) u_t(x) \quad\text{for $t\geq 0$ and $x\in \Omega$,} 
\end{equation} 
where $k$ is the transmission rate  kernel of the disease,  $\gamma$ is the recovery  rate 
function and  $\mu(\mathrm{d}  y)$  is the probability  for  an individual  taken  at 
random  to  be  of  type  $y\in  \Omega$;  see Equation~\eqref{eq:SIS2}.

\medskip

In an homogeneous population, the \emph{basic reproduction number} of an infection,
denoted by~$R_0$, is defined as the number of secondary cases one individual  generates on
average  over the course of  its infectious period,   in   an   otherwise   uninfected  
(susceptible)   population. Intuitively,  the  disease should  die  out  if~$R_0<1$ and 
invade  the population  if~$R_0>1$.   For  the heterogenous generalization of many 
classical  models  in epidemiology (including the heterogeneous SIS model), it is still
possible to define a meaningful  basic reproduction number~$R_0$, as the number of
secondary cases generated by a \textit{typical} infectious individual    when    all    
other    individuals   are uninfected and  the threshold  phenomenon occurs
\cite{Diekmann1990}.   In the setting  of~\cite{delmas_infinite-dimensional_2020},  the 
reproduction number  $R_0$ then corresponds to  the spectral  radius of  the next-generation
operator defined as  the  integral operator  associated  to  the  kernel
$k(x,y)/\gamma(y)$.

After a vaccination campaign, let the vaccination strategy $\eta$ denote the (non
necessarily homogeneous) proportion of the \textbf{non-vaccinated} population,  and let
the effective reproduction number $R_e(\eta)$ denote the corresponding reproduction 
number of the non-vaccinated population.
Following~\cite[Section~5.3.]{delmas_infinite-dimensional_2020}, the effective
reproduction number $R_e(\eta)$ is given by the spectral radius of the effective
next-generation operator  defined as the integral operator associated to  the kernel
$k(x,y)\eta(y) /\gamma(y)$,  where $\eta(y)$ is the proportion of individuals of type $y$
which are not vaccinated.

The vaccination  strategy~$\eta$  is called \emph{critical} if $R_e(\eta) =  1$. 
Assuming~$R_0>1$, suppose now that only a proportion~$\etau$  of the population can catch 
the disease, the rest being  perfectly immunized.  An  infected individual will  now only
generate~$\etau  R_0$  new  cases,  since  a  proportion~$1-\etau$  of previously
successful infections will  be prevented.  Therefore, the new \emph{effective reproduction
number} is equal to~$R_e(\etau) = \etau R_0$. This fact led to the recognition by Smith in
1970 \cite{smith_prospects_1970} and Dietz in 1975 \cite{dietz1975transmission} of a
simple threshold theorem: the incidence of an infection declines if the proportion of
non-immune individuals is reduced below~$\etauc = 1/R_0$.  This effect is called
\emph{herd immunity}, and the corresponding proportion~$1-\etauc$ of people that have to
be vaccinated is called the \emph{herd immunity threshold} \cite{smith_concepts_2010,
somerville_public_2016}.

\section{Critical vaccination given by the endemic equilibrium}

However,  herd  immunity  can  also  be  achieved  using  a  non-uniform vaccination
strategy when the population is heterogeneous.  For example, the discussion of    
vaccination control of gonorrhea in~\cite[Section~4.5]{hethcote} suggests that  it  may 
be  better  to prioritize  the  vaccination of  people  that  have already  caught  the
disease: this leads us to consider  a vaccination strategy guided by the equilibrium 
state. For the SIS model in heterogeneous population  with $R_0>1$,
there exists a  maximal endemic equilibrium, say  $\mathfrak{g}$, where $\mathfrak{g}(x)$
represents the fraction of infected people in  the group with feature $x$. In other words,
the function $\mathfrak{g}$  is  the  maximal  $[0,  1]$-valued solution $g$ of:
\begin{equation}\label{eq:endemic}
  (1 - g(x)) \int_\Omega  k(x,y) \, g(y)\, \mu(\mathrm{d} y) =
  \gamma(x) g(x) \quad\text{for  $x\in \Omega$.} 
\end{equation}
 
Let us mention that if there exist isolated subpopulations, it is possible to have other
endemic equilibria, \textit{i.e.}, solutions to Equation~\eqref{eq:endemic} that are not
equal to $0$ for all $x$. Irreducibility conditions on the kernel $k$ ensure however the
uniqueness of the endemic equilibrium
\cite{lajmanovich1976deterministic,delmas_infinite-dimensional_2020}. Consider the
vaccination strategy, denoted by~$\etae$, corresponding to vaccinating a fraction
$\mathfrak{g}(x)$ of people in the group with feature $x$, for all groups. In  our
mathematical framework, this amounts to setting:
\begin{equation}
  \etae(x)=1 - \mathfrak{g}(x) \quad\text{for  $x\in \Omega$.}
\end{equation}
The following  result ensures that this strategy reaches herd immunity; see
Theorem~\ref{theo:main} in Section~\ref{sec:main} for a precise mathematical
statement.

\begin{theo*}
  In  the   heterogeneous  SIS  model  with   non-zero  maximal  endemic
  equilibrium, the vaccination strategy~$\etae$ is critical:
  \begin{equation}
     R_e(\etae)=1.
  \end{equation}
\end{theo*}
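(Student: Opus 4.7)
My plan is to prove \(R_e(\etae)=1\) by bracketing: a straightforward lower bound \(R_e(\etae)\geq 1\), obtained by direct substitution in~\eqref{eq:endemic}, and a more delicate upper bound \(R_e(\etae)\leq 1\), obtained by contrapositive from the maximality of~\(\mathfrak{g}\).

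For the lower bound, I would exhibit an explicit nonnegative eigenfunction of the effective next-generation operator \(T_{\etae}\) (the integral operator with kernel \(k(x,y)(1-\mathfrak{g}(y))/\gamma(y)\)) with eigenvalue~\(1\). Since~\eqref{eq:endemic} forces \(\mathfrak{g}(x)<1\) wherever \(\gamma(x)>0\), the function
\[
 v(y):=\frac{\gamma(y)\,\mathfrak{g}(y)}{1-\mathfrak{g}(y)}
\]
is well defined, nonnegative and not identically zero. A direct substitution, in which the factor \((1-\mathfrak{g}(y))/\gamma(y)\) in the kernel cancels the denominator of~\(v\), reduces \((T_{\etae}v)(x)\) to \(\int_\Omega k(x,y)\mathfrak{g}(y)\,\mu(\mathrm{d}y)\); but by~\eqref{eq:endemic} the latter equals \(v(x)\). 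The Collatz--Wielandt / Krein--Rutman lower bound for the spectral radius of a positive operator then yields \(R_e(\etae)\geq 1\).

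For the upper bound I would argue by contradiction: assuming \(R_e(\etae)>1\), I would produce an endemic equilibrium \(g^\star\) of the original dynamics with \(g^\star\geq\mathfrak{g}\) and \(g^\star\neq\mathfrak{g}\), contradicting the maximality of~\(\mathfrak{g}\). Under the positivity/compactness hypotheses inherited from~\cite{delmas_infinite-dimensional_2020}, the Krein--Rutman theorem provides a positive Perron eigenfunction \(u\) with \(T_{\etae}u=R_e(\etae)\,u\). Setting \(w:=(1-\mathfrak{g})u/\gamma\) and \(F(g):=(1-g)\int kg\,\mu-\gamma g\), an expansion at~\(\mathfrak{g}\) (using \(\int k\mathfrak{g}\,\mu=\gamma\mathfrak{g}/(1-\mathfrak{g})\) from~\eqref{eq:endemic} and \(\int kw\,\mu=R_e(\etae)\,u\) from the eigenvector equation) yields
\[
 F(\mathfrak{g}+\epsilon w)=\epsilon\,\gamma\,w\,\bigl[R_e(\etae)-(1-\mathfrak{g})^{-1}\bigr]+O(\epsilon^2).
\]
Wherever \(\mathfrak{g}<1-1/R_e(\etae)\), the function \(\mathfrak{g}+\epsilon w\) is a sub-solution of \(F=0\) for small \(\epsilon>0\), and the monotonicity of the SIS semiflow on \([0,1]^\Omega\) (cf.~\cite{delmas_infinite-dimensional_2020}) carries the corresponding trajectory upward to an equilibrium strictly above~\(\mathfrak{g}\).

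The main obstacle lies precisely in those regions where \(\mathfrak{g}(x)\) can approach~\(1\) (for instance, where \(\gamma\) is very small): there the loss term \((1-\mathfrak{g})^{-1}\) dominates \(R_e(\etae)\), and the linear perturbation above ceases to be a sub-solution. Bypassing this obstruction requires a nonperturbative construction: either running a monotone iteration of \(F(g)=0\) anchored at the nontrivial endemic equilibrium of the \emph{vaccinated} dynamics (whose existence follows from \(R_e(\etae)>1\) via the usual threshold theorem in the heterogeneous SIS setting), or first proving the three-way equivalence announced in the abstract (non-maximality of~\(g\), linear instability of~\(g\) for the original dynamics, linear instability of the disease-free state for the \((1-g)\)-vaccinated dynamics) and specializing to \(g=\mathfrak{g}\). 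Either route relies crucially on the monotonicity and Perron--Frobenius structure of the heterogeneous SIS semiflow.
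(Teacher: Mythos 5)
Your lower bound $R_e(\etae)\geq 1$ is correct and is exactly the paper's argument: the function $v=\gamma\mathfrak{g}/(1-\mathfrak{g})=\Tinf_k(\mathfrak{g})$ is a nonnegative, nonzero fixed point of the effective next-generation operator $\Tinf_{\kkk\etae}$, and the Collatz--Wielandt inequality for positive operators gives $\rho(\Tinf_{\kkk\etae})\geq 1$. That half is complete.

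The upper bound, however, contains a genuine gap, and you have identified it yourself without closing it: the perturbation $\mathfrak{g}+\epsilon w$ built from the right Perron eigenfunction fails to be a subsolution on the set where $(1-\mathfrak{g})^{-1}>R_e(\etae)$, and neither of your proposed escapes works as stated. The second one (``first prove the three-way equivalence, then specialize'') is circular: the hard implication of that equivalence is precisely the upper bound you are trying to establish. The first one (a monotone scheme anchored at the nontrivial equilibrium $\mathfrak{h}$ of the vaccinated dynamics $F_{\etae}$) points the wrong way: the natural candidate $g=\mathfrak{g}+(1-\mathfrak{g})\mathfrak{h}$ satisfies $1-g=(1-\mathfrak{g})(1-\mathfrak{h})$ and, using the two equilibrium identities, $F(g)=-\gamma\,\mathfrak{g}\,\mathfrak{h}\leq 0$; this is a \emph{super}solution, so the semiflow started there may simply relax back to $\mathfrak{g}$ and no contradiction with maximality follows. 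The missing idea is a support-localization step. The paper takes a \emph{left} Perron eigenfunction $v$ of $\Tinf_{(1-\mathfrak{g})k/\gamma}$ with eigenvalue $R_e(\etae)>1$; pairing it with $(1-\mathfrak{g})\Tinf_k(\mathfrak{g})=\gamma\mathfrak{g}$ forces $\braket{v,\gamma\mathfrak{g}}=0$, so $v$ is carried by $A^c$ with $A=\{\mathfrak{g}>0\}$. One then checks that the truncated kernel $k'=\ind{A^c}\,k\,\ind{A^c}$ still satisfies $\rho(\Tinf_{k'/\gamma})=R_e(\etae)>1$, and on $A^c$ one has $1-\mathfrak{g}=1$, so the associated eigenfunction $w$ (which vanishes on $A$) does yield $F(\mathfrak{g}+\varepsilon w)\geq 0$ for small $\varepsilon>0$; the monotonicity lemma (Lemma~\ref{lem:Fh>0}) then gives $\mathfrak{g}+\varepsilon w\leq\mathfrak{g}$, a contradiction. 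Without this localization, or an equivalent device, your upper bound does not go through in the regions where $\mathfrak{g}$ is close to $1$.
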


Let us stress that implementing the critical vaccination strategy $\etae$ can be achieved
without estimating  the transmission rate kernel  and the recovery rate.

\medskip

The proof of the theorem relies on  the study of the spectral bound of the linearized
operator associated to equation~\eqref{eq:def-sis} near an equilibrium. When  $R_0>1$,
this spectral bound  is non-positive at the maximal  equilibrium and  positive at  all
other  equilibria; see Proposition~\ref{prop:caract-g}~\ref{s(g)<0}. Thus, the
non-maximality of an equilibrium  is equivalent to its linear  instability in the original
dynamics.  We also prove  the  linear instability  of  the disease-free  state  in the 
modified dynamics where we vaccinate according to a non maximal equilibrium; see
Proposition~\ref{prop:caract-g}~\ref{s(1-h)<0}.

\section{Discussion}

We  expect  the  results obtained  here for  the  SIS model  to  be generic,  in  the
sense  that similar behaviours should also be observed in more realistic and complex
models in epidemiology for non-homogeneous populations: when an endemic equilibrium
exists, vaccinating the population according to the maximal endemic profile should protect the
population from the disease.

\medskip

We  refer to~\cite{ddz-theo} for a general framework for cost comparison of vaccination
strategies and the notions of ``best'' and ``worst'' vaccination strategies; see also
\cite{ddz-Re,ddz-cordon,ddz-reg} for further comments and various examples of optimal
vaccinations.

Consider a general cost function~$C$ which measures  the cost for the society  of a 
vaccination strategy  (production and  diffusion).  A simple  and natural  choice is  the
uniform  cost  given  by the overall proportion of vaccinated individuals:
\begin{equation}
  \label{eq:def-cu}
  C (\eta)=\int_\Omega (\un-\eta)\, \mathrm{d}\mu=1- \int_\Omega
  \eta\, \mathrm{d}\mu.
\end{equation}
We have that $C(\etauc)$ is equal to the herd immunity threshold $1- 1/R_0$
while~$C(\etae)$ is equal to  the proportion of people in the endemic state in an SIS
infection $\int_\Omega \mathfrak{g} \, \mathrm{d}\mu$. It is not possible to determine
which strategy is cheaper in general. However, in the following examples, we are able to
compare their costs for mixing structures that are redundant in the epidemiologic
litterature.

\begin{example}[Homogeneous mixing]
  If the population is homogeneous (which corresponds to the one-dimensional
  SIS model where $\Omega$ is a singleton), then the maximal equilibrium
  is constant equal to $1-1/R_0$. It follows that $C(\etauc)=C(\etae)$.
\end{example}

\begin{example}[Proportionate mixing structure with two subpopulations]
  The proportionate mixing  is a classical mixing  structure introduced by
  \cite{HeterogeneityINold1980} and used in many different epidemiological models.  It
  assumes  that the  number of  adequate contacts  between two subpopulations is 
  proportional to the  relative activity levels  of the two subpopulations. Thus
  individuals  in more active subpopulations will have more adequate contacts. Let us
  consider the simple case where there are only two subpopulations. Then the contact
  matrix is given by:
  \[
    K = 
    \begin{pmatrix}
      a^2 & ab \\
      ab & b^2
    \end{pmatrix}
  \]
  where $a$ and $b$ are positive constants that correspond to the activity levels of the 
  first and second subpopulations  respectively.  Denote by $\mu_1$ and $\mu_2$ their
  respective  relative size, suppose that the recovery rate $\gamma$ is equal to  $1$ for
  both subpopulations, and assume without loss of generality that $a\geq b$. In this case,
  we get that:
  \[ R_0 = a^2 \mu_1 + b^2 \mu_2, \quad R_e(\eta)=a^2 \eta_1\mu_1 + b^2 \eta_2\mu_2
    \quad\text{and}\quad
    C(\eta)=1- (\eta_1\mu_1 +  \eta_2\mu_2) 
  \]
  for the vaccination strategy $\eta=(\eta_1, \eta_2)$.  If $R_0 > 1$, then  the (unique)
  non-zero equilibrium  satisfies:
  \[ (1- \mathfrak{g}_i)\, \sum_{j=1}^2 K_{i,j}\, \mathfrak{g}_j\, \mu_j = \mathfrak{g}_i
  \quad\text{for $i=1, 2$}, \]
  and the corresponding vaccination strategy $\etae=1-  \mathfrak{g}$ is given by:
  \[ \etae= \left(  \frac{1}{1 + ac},  \frac{1}{1 + bc} \right), \]
  where $c\in [(1-R_0)/a, (1-R_0)/b]$ is  the unique positive solution of  the
  second-order
  equation $R_e(\etae)=1$.  It  is elementary to check that  in this case $C(\etae) <
  C(\etau)$, with an equality if and only if $a= b$.  However, the  critical  vaccination 
  strategy  with minimal  cost,  say  $\etao$, corresponds to  vaccinating in  priority
  the  population with  the highest activity rate, that is, if $a>b$:
  \[ \etao=\left(\frac{1- \min(1, b ^2 \mu_2)}{a^2 \mu_1}, \frac{1}{\max(1,b^2
  \mu_2)}\right).  \]
\end{example}

\begin{example}[Age and activity structure]
  In \cite{AMathematicalBritto2020}, Britton, Ball and Trapman study an SEIR model, where
  immunity can be obtained through infection. Using parameters derived from real-world
  data, these authors noticed that the disease-induced herd immunity level can, for some
  models, be substantially lower than the classical herd immunity threshold~$1 - 1/R_0$. 
  This can be reformulated in term of targeted vaccination strategies: prioritizing the
  individuals that are more likely to get infected in a SEIR epidemic may be more
  efficient than distributing uniformly the vaccine in the population.

  We use the same age and activity structures to determine which strategy between $\etae$
  and $\etauc$ is less costly. More precisely, the community is categorized into six age
  groups and contact rates between them are derived from an empirical study of social
  contacts \cite{UsingDataOnSWallin2006}.  For the  activity structure, individuals   are 
  categorized  into  three different activity  levels, which are arbitrary  and  chosen
  for illustration purposes: 50\%  of each age cohort have normal activity, 25\% have low
  activity corresponding to half as many contacts compared with normal activity, and 
  25\% have  high activity  corresponding to twice  as  many  contacts  as  normal
  activity.  Note that when the population is only structured by activity, the mixing is
  proportionate. Assuming  that the recovery rate is constant equal to $1$, we solved
  numerically Equation~\eqref{eq:endemic} and computed  in Table~\ref{tab:1} the cost of
  the  uniform and the equilibrium strategies for different values of $R_0$ and different
  population structures. In Table~\ref{tab:2}, the fractions of vaccinated individuals in
  the different age activity groups when following the strategy $\etae$. This is done by
  assuming $R_0 = 2$. Note than in this case, only three subpopulations need to be
  vaccinated at a level higher than~$1-1/R_0$. 
\end{example}

\newpage
   
\renewcommand{\arraystretch}{1.5}

\begin{table}[h]
\caption{Cost of the equilibrium vaccination compared to the herd immunity level for
different population structures. Numbers correspond to percentage.}
\begin{center}

\begin{tabular}{rSScSScSS}
    \toprule
 &\multicolumn{2}{c}{$R_0 = 2$} & \hspace{1.5em}& \multicolumn{2}{c}{$R_0 = 2.5$}
  &\hspace{1.5em}&\multicolumn{2}{c}{$R_0 = 3$}\\
 \cmidrule(r){2-3} \cmidrule(lr){5-6} \cmidrule(l){7-9}
     &{$C(\etae)$} & {$C(\etau)$} &&{$C(\etae)$} &{ $C(\etau)$ }&& {$C(\etae)$} & {$C(\etau)$}\\
    \midrule
  Homogeneous & 50 & 50 && 60 & 60 && 66.7 & 66.7\\
  Age structure & 46.6 & 50 && 56.7 & 60 && 63.9 & 66.7\\
  Activity structure & 40.1 & 50 && 50 & 60 && 57 & 66.7\\
  Age and activity structure & 35.7 & 50 && 45.2 & 60 && 52.2 & 66.7\\
    \bottomrule
\end{tabular}
\end{center}
\label{tab:1}
\end{table}
\bigskip
\begin{table}[h]
  \caption{Fraction of vaccinated individuals in different groups for the strategy~$\etae$.
    The population structure includes both age and
  activity. Numbers correspond to percentage.
  These values assume that $R_0 = 2$, so that the uniform critical vaccination
  consists in vaccinating $50\%$ of the population: 
  only three groups require more vaccine in the targeted strategy than in the uniform strategy.}
\begin{center}
\begin{tabular}{rSSS}
    \toprule
    Age group & {Low activity} & {Average activity} & {High activity} \\
    \midrule
    0--5 years & 12.0 & 21.4 & 35.3 \\
    6--12 years & 18.5 & 31.2 & 47.5 \\
    13--19 years & 22.9 & 37.3 & 54.3 \\
    20--39 years & 29.1 & 45.1 & 62.1 \\
  40--59 years &  20.9 & 34.6 & 51.4 \\
    $\geq$ 60 years & 12.4 & 22.1 & 36.2 \\
    \bottomrule
\end{tabular}
\end{center}
\label{tab:2}
\end{table}

\clearpage

\section{Proof}
\label{sec:main}

The differential equations governing the epidemic dynamics in meta-population~SIS models
were developed  in paper~\cite{lajmanovich1976deterministic} in finite dimension and
generalized in \cite{delmas_infinite-dimensional_2020}.

\subsection{The heterogeneous SIS model}

Let~$(\Omega, \cf,  \mu)$ be a  probability space, where~$x  \in \Omega$ represents  a
feature  and the  probability measure~$\mu(\mathrm{d}  x)$ represents the fraction of the
population with feature~$x$. The parameters of the SIS model are given by a \emph{recovery
rate function} $\gamma$, which is a positive bounded measurable function defined on
$\Omega$, and a \emph{transmission rate kernel} $k$, which is a non-negative measurable
function defined on $\Omega^2$. \medskip

In accordance with \cite{delmas_infinite-dimensional_2020}, we consider for a kernel $\kk$
on $\Omega$ and $q\in (1, +\infty )$ its   norm:
\begin{equation*}
  \norm{\kk}_{\infty ,q}=\sup_{x\in \Omega}\, \left(\int_\Omega \kk(x,y)^q\,
  \mu(\mathrm{d} y)\right)^{1/q}.
\end{equation*}
For a kernel $\kk$ on $\Omega$ such that $\norm{\kk}_{\infty ,q}$ is finite for some $q\in
(1, +\infty )$, we define the integral operator~$\Tinf_{\kk}$  on the set $\cl$ of bounded
measurable real-valued function on $\Omega$ by:
\begin{equation}\label{eq:def-Tkk}
  \Tinf_\kk (g) (x) = \int_\Omega \kk(x,y) g(y)\,\mu(\mathrm{d}y)
  \quad \text{for } g\in \cl \text{ and } x\in \Omega.
\end{equation}

By convention, for~$f,g$ two non-negative measurable functions defined on~$\Omega$
and~$\kk$ a kernel on~$\Omega$, we denote by $f\kk g$ the kernel on $\Omega$ defined by:
\begin{equation}
  \label{eq:def-fkg}
  f\kk g:(x,y)\mapsto f(x)\, \kk(x,y) g(y).
\end{equation}
We shall consider the kernel $\kkk=k/\gamma$ (corresponding to~$k\gamma^{-1}$, which 
differs in general from~$\gamma^{-1} k$), which is thus  defined by:
\begin{equation}
  \label{eq:def-kk}
  \boxed{    \kkk(x,y)=k(x,y)\, \gamma(y)^{-1}.}
\end{equation}
We shall assume that:
\begin{equation}\label{eq:bded}
  \boxed{ \norm{\kkk}_{\infty ,q}<\infty 
    \quad\text{for some $q\in (1, +\infty )$.}}
\end{equation}
The integral operator~$\Tinf_{\kkk}$  is   the  so  called  \emph{next-generation
operator}.

\medskip

Let $\Delta=\{f\in \mathscr{L}^\infty\,\colon\, 0\leq  f\leq 1\}$ be the
subset of non-negative functions bounded by~$1$, and let $\un\in \Delta$
be  the constant  function  equal  to 1.   The  SIS dynamics  considered
in~\cite{delmas_infinite-dimensional_2020} follows  the vector field~$F$
defined on~$\Delta$ by:
\begin{equation}\label{eq:vec-field}
\boxed{  F(g) = (\un - g) \Tinf_k (g) - \gamma g.}
\end{equation}
More precisely, we consider~$u=(u_t, t\in \R)$, where~$u_t\in \Delta$ for all~$t\in\R_+$
such that:
\begin{equation}\label{eq:SIS2}
 \boxed{   \partial_t u_t = F(u_t)} \quad\text{for } t\in \R_+,
\end{equation}
with initial condition~$u_0\in \Delta$. The value~$u_t(x)$ models the probability that an
individual of feature~$x$ is infected at time~$t$; it is proved
in~\cite{delmas_infinite-dimensional_2020} that such a solution~$u$ exists and is unique.

\medskip

An \emph{equilibrium} of~\eqref{eq:SIS2} is a function~$g \in \Delta$ such that~$F(g) =
0$. According to \cite{delmas_infinite-dimensional_2020}, there exists a maximal
equilibrium~$\mathfrak{g}$, \textit{i.e.}, an equilibrium such that all other
equilibria~$h\in \Delta$ are dominated by~$\mathfrak{g}$: $h \leq \mathfrak{g}$. It is to
this maximal equilibrium that the process stabilizes when started from a situation where
all the population is infected, that is,   if  $u_0=\un$, then we have:
\[
  \boxed{  \lim_{t\rightarrow \infty } u_t=\mathfrak{g}.}
\]
 
\medskip

For $T$ a bounded operator on $\cl$ endowed with its usual  supremum
norm, we define by~$\norm{T}_{\cl}$ its operator norm and its spectral
radius is given by:
\[
   \rho(T)=  \lim_{n\rightarrow \infty } \norm{T^n}_{\cl}^{1/n}.
\] 
The \emph{reproduction number}~$R_0$ associated to the SIS model given by~\eqref{eq:SIS2} is
the spectral radius of the next-generation operator:
\begin{equation}\label{eq:def-R0-2}
 \boxed{   R_0= \rho (\Tinf_{\kkk}),}
\end{equation}
If~$R_0\leq 1$ (sub-critical and critical case), then~$u_t$ converges pointwise to~$0$
when~$t\to\infty$. In particular, the maximal equilibrium~$\mathfrak{g}$ is equal to~$0$
everywhere. If~$R_0>1$ (super-critical case), then~$0$ is still an equilibrium but
different from the maximal equilibrium $\mathfrak{g}$, as~$\int_\Omega \mathfrak{g} \,
\mathrm{d}\mu > 0$.

\subsection{Vaccination strategies}\label{sec:vacc}

A \emph{vaccination strategy}~$\eta$ of a vaccine with perfect efficiency is an element
of~$\Delta$, where~$\eta(x)$ represents the proportion of \emph{\textbf{non-vaccinated}}
individuals with feature~$x$. Notice that~$\eta\, \mathrm{d} \mu$ corresponds in a sense
to the effective population.
In particular, the ``strategy'' that consists in vaccinating no one (resp. everybody)
corresponds to $\eta = \un$, the constant function equal to 1, (resp. $\eta
= \zero$, the constant function equal to 0).
\medskip

Recall the definition of the kernel~$f\kkk g$ from~\eqref{eq:def-fkg}. For~$\eta \in
\Delta$, the kernel~$\kk\eta=k\eta/\gamma$ has  finite norm $\norm{\cdot}_{\infty ,
q}$, so we can consider the bounded positive
operators~$\Tinf_{\kkk \eta }$ and~$\Tinf_{k\eta}$ on~$\mathscr{L}^\infty$. According
to \cite[Section~5.3.]{delmas_infinite-dimensional_2020}, the SIS equation with
vaccination strategy~$\eta$ is given by~\eqref{eq:SIS2}, where~$F$ is replaced by~$F_\eta$
defined by:
\begin{equation}\label{eq:vec-field-vaccin}
 \boxed{   F_\eta(g) = (\un - g) \Tinf_{k\eta}(g) - \gamma g.}
\end{equation}
We denote by~$u^\eta=(u^\eta_t, t\geq 0)$ the corresponding solution with initial
condition~$u_0^\eta \in \Delta$. We recall that~$u_t^\eta(x)$ represents the probability
for an non-vaccinated individual of feature~$x$ to be infected at time
$t$.
We define the \emph{effective reproduction number} $R_e(\eta)$
associated to the vaccination strategy $\eta$ as  the spectral radius
of the effective next-generation operator~$\Tinf_{\kkk \eta}$:
\begin{equation}\label{eq:def-R_e}
  \boxed{R_e(\eta)=\rho(\Tinf_{\kkk\eta}).}
\end{equation}
For example, for the trivial vaccination strategies we get~$R_e(\un) =
R_0$ and $R_e(\zero) = 0$.

We denote by~$\mathfrak{g}_\eta$ the corresponding maximal equilibrium. In particular, we have:
\begin{equation}
  \label{eq:F(g)=0}
  F_\eta(\mathfrak{g}_\eta)=0.
\end{equation}
In particular, we have:
\[
  R_e(\un)=R_0 \quad\text{and}\quad \mathfrak{g}=\mathfrak{g}_{\un}.
\]

\subsection{Critical vaccination strategies}

If~$R_0\geq 1$, then a  vaccination
strategy~$\eta$ is called \emph{critical} if it achieves precisely herd immunity, that
is~$R_e(\eta) = 1$.

As the spectral radius is positively
homogeneous (that is, $\rho(\lambda t)=\lambda \rho(T)$ for $\lambda\geq 0$),
we also get, when~$R_0\geq 1$, that the uniform strategy that corresponds to the constant
function:
\[
\boxed{\etauc = \frac{1}{R_0}\un}
\]
is critical, as~$R_e(\etauc)=1$. This is consistent with results obtained in the
homogeneous model.

\medskip

As hinted in \cite[Section~4.5]{hethcote} for vaccination control of gonorrhea, it is
interesting to consider vaccinating people with feature $x$ with
probability~$\mathfrak{g}(x)$. This corresponds to the strategy based on the maximal
equilibrium:
\[
 \boxed{  \etae = \un -\mathfrak{g}.}
\]
The following  result entails  that this strategy  is critical  and thus
achieves  herd  immunity.  Recall  that  in the  (infinite
dimensional)  SIS  model~\eqref{eq:vec-field}  on the  probability  space
$(\Omega, \cf, \mu)$ the recovery rate function $\gamma$ is positive and
bounded,  the  transmission  rate  $k$  is  non-negative  and  the  norm
$\norm{\kkk}_{\infty ,  q} $ of  the kernel $\kkk=k/\gamma$ is  finite for
some $q\in (1, +\infty )$.

\begin{theorem}[The maximal equilibrium yields a critical vaccination]
  \label{theo:main}
  Consider  the  SIS  model~\eqref{eq:vec-field} under  the  boundedness
  assumption~\eqref{eq:bded}.  If~$R_0\geq  1$,   then  the  vaccination
  strategy $\etae$ is critical, that is,~$R_e(\etae)=1$.
\end{theorem}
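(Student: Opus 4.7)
My plan is to establish $R_e(\etae)=\rho(\Tinf_{\kkk\etae})=1$ by matching bounds from above and below. The case $R_0=1$ is immediate: then $\mathfrak{g}\equiv 0$, $\etae=\un$, and $R_e(\etae)=R_e(\un)=R_0=1$. I may therefore assume $R_0>1$, so $\mathfrak{g}\neq 0$; I also note $\mathfrak{g}(x)<1$ for every $x\in\Omega$, since at a point where $\mathfrak{g}(x)=1$ the equation $F(\mathfrak{g})(x)=0$ would force $\gamma(x)=0$, contradicting $\gamma>0$.

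For the lower bound $R_e(\etae)\geq 1$, I would exhibit an explicit eigenvector. Using the equilibrium relation $(\un-\mathfrak{g})\Tinf_k\mathfrak{g}=\gamma\mathfrak{g}$, set $v:=\Tinf_k\mathfrak{g}=\gamma\mathfrak{g}/(\un-\mathfrak{g})$, which is a nonnegative, bounded and nontrivial element of $\mathscr{L}^\infty$. A direct computation with the kernel $k(x,y)(1-\mathfrak{g}(y))/\gamma(y)$ of $\Tinf_{\kkk\etae}$ yields
\[
\Tinf_{\kkk\etae}(v)(x)=\int_\Omega k(x,y)\,\frac{\un-\mathfrak{g}(y)}{\gamma(y)}\cdot\frac{\gamma(y)\mathfrak{g}(y)}{\un-\mathfrak{g}(y)}\,\mu(\mathrm{d}y)=\Tinf_k\mathfrak{g}(x)=v(x),
\]
the pointwise cancellation being legitimate since $\mathfrak{g}<1$ and $\gamma>0$. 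Hence $1$ is an eigenvalue of $\Tinf_{\kkk\etae}$, and $R_e(\etae)\geq 1$.

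For the upper bound $R_e(\etae)\leq 1$, I would exploit the maximality of $\mathfrak{g}$ via the spectral characterization announced in Proposition~\ref{prop:caract-g}. The underlying principle is that at the maximal equilibrium the spectral bound of $DF(\mathfrak{g})=M_{\un-\mathfrak{g}}\Tinf_k-M_{\gamma/(\un-\mathfrak{g})}$ is non-positive: were there a positive eigenvector $\phi$ with $DF(\mathfrak{g})\phi>0$, then $\mathfrak{g}+\varepsilon\phi$ would be a strict sub-solution of $F$ for small $\varepsilon>0$, and monotonicity of the SIS flow would produce an equilibrium strictly above $\mathfrak{g}$, contradicting maximality. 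Part~\ref{s(1-h)<0} of the cited proposition then transfers this condition to the disease-free linearization $DF_\etae(0)=\Tinf_{k\etae}-M_\gamma$ of the vaccinated dynamics, yielding $s(DF_\etae(0))\leq 0$; since this spectral bound has the same sign as $R_e(\etae)-1$, one concludes $R_e(\etae)\leq 1$, and together with the lower bound this gives $R_e(\etae)=1$.

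The main obstacle is precisely this last transfer, since $DF(\mathfrak{g})$ and $DF_\etae(0)$ are not conjugate via any multiplication operator: the factor $\un-\mathfrak{g}$ sits on opposite sides of $\Tinf_k$ in the two operators, and their diagonal parts carry different powers of $\un-\mathfrak{g}$. I expect the bridge to proceed through the $AB/BA$ identity $\rho(\Tinf_k M_{\etae/\gamma})=\rho(M_{\etae/\gamma}\Tinf_k)$---which is already the hidden reason the same function $v$ works for the lower bound, since $\mathfrak{g}$ itself is a fixed point of $M_{\etae/\gamma}\Tinf_k$---combined with Krein--Rutman-style spectral arguments on $\mathscr{L}^\infty$ under the integrability assumption $\|\kkk\|_{\infty,q}<\infty$ to turn the spectral-bound condition on each of the two linearized operators into a spectral-radius condition of the form $\rho(\cdot)\leq 1$ on a common positive operator.
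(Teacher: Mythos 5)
Your lower bound is correct and is exactly the paper's argument: the function $v=\Tinf_k(\mathfrak{g})=\gamma\mathfrak{g}/(\un-\mathfrak{g})$ is a nonzero fixed point of $\Tinf_{\kkk\etae}$, so $R_e(\etae)\geq 1$ when $\mathfrak{g}\neq\zero$, and the case $R_0=1$ is handled as you say. The upper bound, however, contains a genuine gap, and it is precisely the one you flag as "the main obstacle." Your route is: maximality of $\mathfrak{g}$ gives $s(DF[\mathfrak{g}])\leq 0$, and then some spectral bridge should yield $R_e(\etae)\leq 1$. The problem is that $s(DF[\mathfrak{g}])\leq 0$ is equivalent (by the very conjugation and $\rho(TS)=\rho(ST)$ manipulations you invoke, using $(\un-\mathfrak{g})/\gamma=1/(\gamma+\Tinf_k(\mathfrak{g}))$) to $R_e((\un-\mathfrak{g})^2)\leq 1$, and since $(\un-\mathfrak{g})^2\leq\un-\mathfrak{g}$ the monotonicity of the spectral radius gives $R_e((\un-\mathfrak{g})^2)\leq R_e(\un-\mathfrak{g})$ --- the inequality points the wrong way. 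No combination of the $AB/BA$ identity with Krein--Rutman positivity can reverse it, because the two quantities genuinely differ for non-equilibrium profiles; the implication $R_e((\un-h)^2)\leq 1\Rightarrow R_e(\un-h)\leq 1$ is only true here because $h$ is the \emph{maximal equilibrium}, and establishing that is the actual content of the theorem, not a spectral identity.

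What the paper does instead is prove the contrapositive of "$h=\mathfrak{g}\Rightarrow R_e(\un-h)\leq 1$" directly, bypassing $DF[h]$ entirely. Assuming $R_e(\un-h)>1$, it takes a left Perron eigenfunction $v$ of $\Tinf_{(1-h)k/\gamma}$, uses the equilibrium relation to show $\braket{v,\gamma h}=0$, hence $v$ is carried by $A^c$ where $A=\{h>0\}$; restricting the kernel to $A^c$ preserves the spectral radius, so the restricted operator still has $\rho>1$ and therefore admits a \emph{positive right} eigenfunction $w$ supported in $A^c$ with $\Tinf_{k'}(w)-\gamma w=\lambda w$, $\lambda>0$. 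Then $F(h+\varepsilon w)=w(\lambda-\varepsilon\Tinf_k(w))\geq 0$ for small $\varepsilon$, and the monotonicity lemma (Lemma~\ref{lem:Fh>0}) forces $h+\varepsilon w\leq\mathfrak{g}$, so $h$ is not maximal. This localization to the complement of the support of $h$ is the missing idea: it is what lets one build a genuine sub-solution of $F$ (not of the linearization at $\mathfrak{g}$) out of the hypothesis $R_e(\un-h)>1$. Your sketch of "a positive eigenvector of $DF[\mathfrak{g}]$ with positive eigenvalue contradicts maximality" is sound as far as it goes, but it only delivers $R_e((\un-\mathfrak{g})^2)\leq 1$, which is strictly weaker than what the theorem asserts.
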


This result will be proved below as a part of Proposition~\ref{prop:caract-g}.

\subsection{Proof of Theorem~\ref{theo:main}}\label{sec:proof}

For an
operator $A$, we denote by $A^\top$ its adjoint. We first give a preliminary
lemma. For the convenience of the reader, we only use references to the results recalled in
\cite{delmas_infinite-dimensional_2020} for positive operators on Banach
spaces. In particular,  we shall use that
if  $\kk$ and $\kk'$ are two  (non-negative) kernels on $\Omega$ with
finite norms $\norm{\cdot}_{\infty ,q}$ for some $q\in (1, +\infty
)$, then we have that:
\begin{equation}
  \label{eq:k>k}
  \kk\geq  \kk' \,\Longrightarrow\,  \rho(\Tinf_\kk)\geq \rho(\Tinf_{\kk'}),
\end{equation}
see for example
\cite[Theorem~3.5(i)]{delmas_infinite-dimensional_2020} as the operator
$\Tinf_\kk-\Tinf_{\kk'}$ is positive. 
We shall also used
that for two bounded operators $T$ and $S$ on $\cl$:
\begin{equation}
  \label{eq:AB=BA}
  \rho(TS)=\rho(ST).
\end{equation} 

We first state two technical lemmas. 

\begin{lemma}\label{lem:prelim-result}
  Let   $\kk$  be   a  non-negative   kernel  on   $\Omega$  such   that
  $\norm{\kk}_{\infty ,  q} $ is finite  for some $q\in (1,  +\infty )$
  and   and consider  the  positive
  bounded linear integral operator $\Tinf_{\kk}$ on $\cl$. If there
  exists        $g\in         \mathscr{L}^\infty_+        $,        with
  $\int_\Omega g \, \mathrm{d} \mu >0$ and $\lambda > 0$ satisfying:
  \[
    \Tinf_{\kk}(g)(x) > \lambda g(x), \quad \text{for all} \quad x \text{ such that }
  g(x)>0, \]
  then we have $\rho(\Tinf_{\kk})>\lambda$.
\end{lemma}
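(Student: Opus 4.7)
The plan is to argue by contradiction. Set $T := \Tinf_\kk$, $A := \{g > 0\}$ (which has positive $\mu$-measure, since $\int_\Omega g\, d\mu > 0$), and $h := T(g) - \lambda g$. By the hypothesis, $h \ge 0$ on $\Omega$ and $h > 0$ on $A$. Assume, for contradiction, that $\rho(T) \le \lambda$.

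First I would establish the non-strict bound $\rho(T) \ge \lambda$ by iteration. Since $T(g) \ge \lambda g$ holds on $\Omega$ (strictly on $A$, trivially on $\{g=0\}$), applying the positive operator $T$ repeatedly yields $T^n(g) \ge \lambda^n g$ for every $n \ge 1$. Taking $\cl$-norms and dividing by $\|g\|_\infty > 0$ gives $\|T^n\|_{\cl} \ge \lambda^n$, whence $\rho(T) = \lim_n \|T^n\|_{\cl}^{1/n} \ge \lambda$ by Gelfand's formula. Combined with the contradiction hypothesis, this forces $\rho(T) = \lambda$.

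Next I would upgrade this to a strict inequality via duality. The adjoint $T^\top$ is the positive integral operator on $L^1(\mu)$ associated with the transpose kernel $(y,x) \mapsto \kk(x,y)$, and has the same spectral radius $\rho(T^\top) = \rho(T) = \lambda$. Using the Perron--Frobenius-type results for positive integral operators gathered in \cite{delmas_infinite-dimensional_2020}, I would extract a non-trivial positive eigenmeasure $\phi \in L^1_+(\mu) \setminus \{0\}$ satisfying $T^\top \phi = \lambda \phi$. The pairing identity
\[
\int_\Omega h\,\phi\,d\mu = \int_\Omega (T(g) - \lambda g)\phi\,d\mu = \int_\Omega g\,(T^\top\phi - \lambda\phi)\,d\mu = 0,
\]
combined with $h \ge 0$, forces $\phi \equiv 0$ $\mu$-almost everywhere on $\{h > 0\} \supseteq A$. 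A contradiction follows as soon as $\phi$ can be chosen with $\mu(\{\phi > 0\} \cap A) > 0$, since then $\int_A h\phi\,d\mu > 0$ would be in conflict with the identity above.

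The hard part will be arranging this last support condition. Under an irreducibility assumption on $\kk$, every positive eigenmeasure of $T^\top$ is $\mu$-a.e.\ strictly positive and the contradiction is immediate; in the general reducible case, the idea is to decompose $\Omega$ along the invariant components of $\kk$, single out one that achieves the spectral radius $\lambda$ and whose intersection with $A$ has positive $\mu$-measure, and select $\phi$ supported on this component. The hypothesis $\mu(A) > 0$, together with the strict defect $h > 0$ on $A$, is exactly what guarantees that such a component exists: were every component carrying the spectral radius disjoint from $A$ modulo null sets, then $g$ would be supported outside those components, and the restricted spectral radius on the support of $g$ would be strictly less than $\lambda$, contradicting $T(g) > \lambda g$ on $A$.
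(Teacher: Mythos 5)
Your opening step (iterating $\Tinf_\kk^n(g)\ge \lambda^n g$ to get $\rho(\Tinf_\kk)\ge\lambda$) is correct, and the duality identity $\int_\Omega h\,\phi\,\rd\mu=\braket{g,\Tinf_\kk^\top\phi-\lambda\phi}=0$ is the right kind of move, but the proof has a genuine gap exactly where you flag ``the hard part'': you never actually produce a nonnegative eigenfunction $\phi$ of the \emph{full} adjoint $\Tinf_\kk^\top$ at $\lambda$ whose support meets $A=\{g>0\}$, and the sketched resolution does not close the gap. First, a decomposition of a general measurable kernel into ``invariant components carrying the spectral radius'' is not available off the shelf in this setting (no atoms, possibly uncountably many components; the toolbox of \cite{delmas_infinite-dimensional_2020} that the lemma is allowed to use does not provide it). Second, even in finite dimensions your first branch fails as stated: if $C$ is a basic class meeting $A$, the Perron left eigenvector of the compression $\ind{C}\,\Tinf_\kk\,\ind{C}$, extended by zero, is in general \emph{not} a left eigenvector of the full operator (only ``distinguished'' classes support global left eigenvectors of a reducible nonnegative matrix), so the pairing identity you rely on --- which needs $\Tinf_\kk^\top\phi=\lambda\phi$ on all of $\Omega$ --- breaks down. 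Third, your second branch asserts that if no spectral-radius component meets $A$ then the spectral radius of the restriction to $A$ is \emph{strictly} below $\lambda$; in infinite dimensions the supremum of component radii need not be attained by any component, and this strict inequality is precisely the kind of statement the lemma is supposed to deliver, so assuming it is close to circular.

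The paper sidesteps all of this by compressing \emph{before} invoking duality: it sets $\Tinf'(f)=\ind{A}\,\Tinf_\kk(\ind{A}f)$, observes that $\Tinf'(g)=\ind{A}\Tinf_\kk(g)>\lambda g$, so the Collatz--Wielandt formula gives $\rho(\Tinf')\ge\lambda>0$, and then takes a left Perron eigenfunction $v$ of $\Tinf'$, which is \emph{automatically} supported in $A$ (since $(\Tinf')^\top v=\rho(\Tinf')v$ forces $v=\ind{A}v$). Because $g>0$ on $A$ one gets $\braket{v,g}>0$, and the identity $(\rho(\Tinf')-\lambda)\braket{v,g}=\braket{v,\Tinf'(g)-\lambda g}>0$ upgrades the bound to the strict inequality $\rho(\Tinf')>\lambda$; monotonicity of the spectral radius under $\ind{A}\kk\ind{A}\le\kk$ then yields $\rho(\Tinf_\kk)>\lambda$ directly, with no contradiction argument. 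If you replace your global eigenfunction $\phi$ by the left Perron eigenfunction of this compressed operator, your support problem disappears and your computation goes through.
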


\begin{proof}
  We simply write $\Tinf$ for $\Tinf_{\kk}$.  Let $A =\set{ g>0}$ be the
  support of  the function  $g$.  Let $\Tinf'$  be the  bounded operator
  defined    by   $\Tinf'(f)    =\ind{A}   \Tinf(\ind{A}    f)$.   Since
  $\Tinf'(g)= \ind{A} \Tinf(\ind{A} g) =  \ind{A} \Tinf(g) > \lambda g$,
  we     deduce    from     the     Collatz-Wielandt    formula
  \cite[Proposition~3.6]{delmas_infinite-dimensional_2020}   that
  $\rho(\Tinf')        \geq        \lambda>0$.       According        to
  \cite[Lemma~3.7~(v)]{delmas_infinite-dimensional_2020},  there  exists
  $v\in L^q_+ \setminus \{ 0 \}$,  seen as an element of the topological
  dual of $\cl$,  a left Perron eigenfunction of $\Tinf'$,  that is such
  that  $  (\Tinf')^\top(v) =\rho(\Tinf')  v$.  In  particular, we  have
  $v=  \ind{A} \,  v$  and thus  $\int_A  v \,  \mathrm{d}\mu  > 0$  and
  $\int_\Omega vg\, \mathrm{d}\mu > 0$. We obtain:
  \[
    (\rho(\Tinf') - \lambda) \braket{v,g} = \braket{v, \Tinf'(g) -
    \lambda g} > 0.
  \]
 As $\Tinf'=\Tinf_{\kk'}$ with $\kk'= \ind{A} \kk\ind {A}\leq  \kk$, we deduce
 from~\eqref{eq:k>k} that $\rho(\Tinf) \geq \rho(\Tinf') >
  \lambda$.
\end{proof}

\begin{lemma}
  \label{lem:Fh>0}
 Consider the  SIS model~\eqref{eq:vec-field} under the boundedness assumption~\eqref{eq:bded}. 
 Let $\eta, g \in \Delta$. If $F_\eta(g) \geq 0$, then we have $g\leq
  \mathfrak{g}_\eta$.
\end{lemma}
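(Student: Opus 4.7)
The plan is to study the solution $(u_t)_{t\geq 0}$ of the vaccinated SIS equation $\partial_t u_t = F_\eta(u_t)$ started from $u_0 = g$, to prove that $t \mapsto u_t$ is non-decreasing using the sub-equilibrium hypothesis $F_\eta(g) \geq 0$, and to conclude by the maximality of $\mathfrak{g}_\eta$. Once the monotonicity is known, the pointwise monotone limit $u_\infty := \lim_{t\to\infty} u_t$ exists in $\Delta$; passing to the limit in the integral form $u_t = g + \int_0^t F_\eta(u_s)\,\mathrm{d}s$, by dominated convergence applied inside $\Tinf_{k\eta}$, shows that $F_\eta(u_\infty) = 0$. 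The maximality of $\mathfrak{g}_\eta$ among equilibria then forces $u_\infty \leq \mathfrak{g}_\eta$, and combining with $g = u_0 \leq u_\infty$ yields $g \leq \mathfrak{g}_\eta$.

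To establish the claimed monotonicity, set $w_t := \partial_t u_t$ and differentiate the ODE once more to get the linearized equation $\partial_t w_t = DF_\eta(u_t)\, w_t$, with Fréchet derivative
\[
DF_\eta(u)\, h = -\bigl(\gamma + \Tinf_{k\eta}(u)\bigr)\, h + (\un - u)\,\Tinf_{k\eta}(h).
\]
The first, multiplicative, contribution can be absorbed by the pointwise integrating factor $M_t(x) := \exp\int_0^t \bigl(\gamma(x) + \Tinf_{k\eta}(u_s)(x)\bigr)\,\mathrm{d}s$: the rescaled function $v_t := M_t\, w_t$ then satisfies $\partial_t v_t = L_t\, v_t$, where $L_t\, h := M_t\,(\un - u_t)\,\Tinf_{k\eta}(M_t^{-1}\, h)$ is a \emph{positive} bounded linear operator on $\mathscr{L}^\infty$. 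Since $v_0 = w_0 = F_\eta(g) \geq 0$ by hypothesis, a Picard iteration for this linear equation remains in the non-negative cone on every compact time interval, giving $v_t \geq 0$ and thus $w_t \geq 0$ for all $t \geq 0$.

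The main obstacle is precisely this monotonicity step. Conceptually it reduces to the quasi-monotonicity of the vector field $F_\eta$, namely the positivity of its off-diagonal coupling $h \mapsto (\un - u)\,\Tinf_{k\eta}(h)$, which makes the non-negative cone invariant under the linearized flow; the delicate point is the clean technical handling of this positivity in the infinite-dimensional $\mathscr{L}^\infty$ setting, together with checking enough regularity of $t\mapsto u_t$ to justify the second differentiation. Once this is in place, the rest of the argument is a straightforward application of monotone convergence, continuity of $F_\eta$ along bounded pointwise-convergent sequences, and the maximality of $\mathfrak{g}_\eta$.
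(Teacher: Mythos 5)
Your proof follows essentially the same route as the paper: run the vaccinated flow from $u_0=g$, show it is non-decreasing because $F_\eta(g)\geq 0$, and let it converge monotonically to an equilibrium, which is dominated by $\mathfrak{g}_\eta$. The only difference is that the paper delegates the monotonicity and convergence-to-equilibrium steps to Propositions~2.10 and~2.13 of \cite{delmas_infinite-dimensional_2020}, whereas you supply your own (correct) integrating-factor and positive-cone argument for the monotonicity.
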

\begin{proof}
 Consider the solution $u_t$ of the SIS model
  $\partial_t u_t = F_\eta(u_t)$  with vaccination $\eta$ and initial
  condition $u_0 = g$.  According to
  \cite[Proposition~2.10]{delmas_infinite-dimensional_2020}, this
  solution is non-decreasing since $F_\eta(g) \geq 0$. According to
  \cite[Proposition~2.13]{delmas_infinite-dimensional_2020}, the
  pointwise limit of $u_t$ is an equilibrium. As this limit is
  dominated by the maximal equilibrium $\mathfrak{g}_\eta$ and since
  $u_t$ is non-decreasing, this proves that $g\leq\mathfrak{g}_\eta$.
\end{proof}

The next result characterizes the maximal
equilibrium~$\mathfrak{g}$ among all equilibria by various spectral
properties; 
Theorem~\ref{theo:main} may be viewed as a corollary to this characterization.
Recall
that $R_0=R_e(\un)$,  and that the vector field~$F$ is defined by \eqref{eq:vec-field}. Let
$DF[h]$ denote the bounded linear operator on $\cl$ of the derivative
of the map $f \mapsto F(f)$ defined on $\cl$ at point $h$:
\[
  DF[h](g) = (1-h) \Tinf_k(g) - (\gamma + \Tinf_{k}(h))g
  \quad \text{for $h,g\in \cl$.}
\]

Let $s(A)$ denote the spectral bound of the bounded operator $A$, see Equation~(33)
in~\cite{delmas_infinite-dimensional_2020}.

\begin{proposition}[Equivalent conditions for maximality]
  \label{prop:caract-g}
Consider the  SIS model~\eqref{eq:vec-field} under the boundedness assumption~\eqref{eq:bded}. Let $h$ in
  $\Delta$ be an equilibrium, that is, $F(h) = 0$. The following properties are equivalent:
  \begin{enumerate}[(i)]
  \item\label{h=g} $h=\mathfrak{g}$,
  \item\label{s(g)<0} $s(DF[h]) \leq 0$,
  \item\label{Rh2<1} $R_e((1-h)^2)\leq 1$.
    \item\label{s(1-h)<0} $s(DF_{(1-h)}[\zero]) \leq 0$.
  \item\label{Rh<1} $R_e(1-h)\leq 1$.
  \end{enumerate}
  Furthermore,  $\mathfrak{g}=\zero$ if and only if $ R_0\leq 1$, and
  if $\mathfrak{g}\neq \zero$, then it is critical: $R_e(1-\mathfrak{g})=1$.
\end{proposition}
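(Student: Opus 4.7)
The plan has three steps organized around the cycle $(\text{iii})\Rightarrow(\text{i})\Rightarrow(\text{v})\iff(\text{iv})\Rightarrow(\text{ii})\iff(\text{iii})$, together with the furthermore coming from a Collatz--Wielandt lower bound combined with the conclusion of $(\text{i})\Rightarrow(\text{v})$. First I would handle the algebraic equivalences $(\text{ii})\iff(\text{iii})$ and $(\text{iv})\iff(\text{v})$. For an equilibrium $h\in\Delta$ one has $h<1$ pointwise (else $F(h)(x)=-\gamma(x)<0$), so $\Tinf_k(h)=\gamma h/(1-h)$ and the derivative simplifies to $DF[h](g)=(1-h)\Tinf_k(g)-\gamma g/(1-h)$. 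Using the principle $s(T-m\,\mathrm{Id})\le 0\iff\rho(T/m)\le 1$ for a positive operator $T$ and positive bounded $m$, together with $\rho(AB)=\rho(BA)$ from~\eqref{eq:AB=BA}, one converts $(\text{ii})$ into $\rho(\Tinf_{k(1-h)^2/\gamma})\le 1$, i.e.~$(\text{iii})$; the linearization $DF_{1-h}[\zero](g)=\Tinf_{k(1-h)}(g)-\gamma g$ gives $(\text{iv})\iff(\text{v})$ analogously. Since $\gamma/(1-h)\ge\gamma$, the monotonicity of the spectral bound in the zero-order term yields $(\text{iv})\Rightarrow(\text{ii})$.

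The engine of $(\text{iii})\Rightarrow(\text{i})$ and $(\text{v})\Rightarrow(\text{i})$ is a clean identity obtained by subtracting $F(h)=F(\mathfrak{g})=0$, substituting $\Tinf_k(h)=\gamma h/(1-h)$, and simplifying:
\[
  (1-h)(1-\mathfrak{g})\,\Tinf_k(d)=\gamma d,\qquad d:=\mathfrak{g}-h.
\]
Suppose $h\ne\mathfrak{g}$; Lemma~\ref{lem:Fh>0} with $\eta=\un$ gives $h\le\mathfrak{g}$, so $d\ge 0$, $d\ne 0$. Write $\tilde d:=\gamma d\in\cl$ and use $\Tinf_k(d)=\Tinf_\kkk(\tilde d)$; the identity rearranges to
\[
  (1-h)^2\Tinf_\kkk(\tilde d)=\frac{1-h}{1-\mathfrak{g}}\,\tilde d,
\]
with $(1-h)/(1-\mathfrak{g})>1$ strictly on $\{\tilde d>0\}\subseteq\{h<\mathfrak{g}\}$. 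Lemma~\ref{lem:prelim-result} applied to the positive integral operator with kernel $(1-h(x))^2\kkk(x,y)$ (dominated by $\kkk$, hence satisfying~\eqref{eq:bded}) yields spectral radius strictly greater than $1$; by~\eqref{eq:AB=BA} this equals $R_e((1-h)^2)$, contradicting~$(\text{iii})$. The same identity rewritten as $(1-h)\Tinf_\kkk(\tilde d)=\tilde d/(1-\mathfrak{g})>\tilde d$ on $\{\tilde d>0\}$ gives $(\text{v})\Rightarrow(\text{i})$ by the same token.

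The delicate closing step is $(\text{i})\Rightarrow(\text{v})$. The case $\mathfrak{g}=\zero$ is covered by the quoted equivalence $\mathfrak{g}=\zero\iff R_0\le 1$; assume $\mathfrak{g}\ne\zero$. The equation $(1-\mathfrak{g})\Tinf_k(\mathfrak{g})=\gamma\mathfrak{g}$ exhibits $\mathfrak{g}$ as an eigenfunction with eigenvalue $1$ of $g\mapsto (1-\mathfrak{g})\Tinf_k(g)/\gamma$, whose spectral radius equals $R_e(1-\mathfrak{g})$ by~\eqref{eq:AB=BA}; Collatz--Wielandt thus gives $R_e(1-\mathfrak{g})\ge 1$, which is the ``$\ge$'' half of the furthermore. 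Suppose for contradiction $R_e(1-\mathfrak{g})>1$, and let $v\in L^q_+\setminus\{0\}$ be the left Perron eigenfunction of $\Tinf_{\kkk(1-\mathfrak{g})}$ provided by~\cite[Lemma~3.7]{delmas_infinite-dimensional_2020}. Pairing the adjoint eigenequation against $\mathfrak{g}$ and invoking $\Tinf_k(\mathfrak{g})=\gamma\mathfrak{g}/(1-\mathfrak{g})$ produces
\[
  (R_e(1-\mathfrak{g})-1)\,\langle v,\mathfrak{g}\rangle = 0,
\]
so $v$ is supported on $\Omega_0:=\{\mathfrak{g}=0\}$; the eigenequation at $y\in\Omega_+:=\{\mathfrak{g}>0\}$ then forces $k(x,y)=0$ for $x\in\mathrm{supp}(v)$. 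The subsystem $\tilde\Omega:=\mathrm{supp}(v)\subseteq\Omega_0$ is therefore invariant under the SIS dynamics, with reproduction number $R_e(1-\mathfrak{g})>1$; the basic threshold result recalled in the paper provides a nonzero maximal equilibrium $g_0$ on $\tilde\Omega$, and its extension by zero $\tilde g\in\Delta$ satisfies $F(\tilde g)\ge 0$, so Lemma~\ref{lem:Fh>0} forces $\tilde g\le\mathfrak{g}=0$ on $\tilde\Omega$, contradicting $g_0>0$. The main obstacle is unquestionably this last step: the adjoint pairing can localize the Perron eigenfunction on $\{\mathfrak{g}=0\}$, and the argument crucially requires the kernel-vanishing condition $k|_{\tilde\Omega\times\Omega_+}=0$ to isolate an invariant subsystem to which the SIS threshold result applies.
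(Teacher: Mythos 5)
Your proposal is correct and, for most of the cycle, matches the paper's own proof: the algebraic conversion of \ref{s(g)<0} and \ref{s(1-h)<0} into the spectral-radius conditions \ref{Rh2<1} and \ref{Rh<1} via $\gamma/(1-h)=\gamma+\Tinf_k(h)$ and $\rho(AB)=\rho(BA)$, the identity $(1-h)(1-\mathfrak{g})\Tinf_k(\mathfrak{g}-h)=\gamma(\mathfrak{g}-h)$ fed into Lemma~\ref{lem:prelim-result} for \ref{Rh2<1}$\Rightarrow$\ref{h=g}, the localization of the left Perron eigenfunction on $\{\mathfrak{g}=0\}$ for \ref{h=g}$\Rightarrow$\ref{Rh<1}, and the fixed-point argument giving $R_e(1-\mathfrak{g})\geq 1$ are all exactly the paper's steps. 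The one genuine divergence is the endgame of \ref{h=g}$\Rightarrow$\ref{Rh<1}: the paper restricts the kernel to $\{h=0\}$, extracts from~\cite[Proposition~4.2]{delmas_infinite-dimensional_2020} a linear-instability direction $w$ with $\Tinf_{k'}(w)-\gamma w=\lambda w$, and checks $F(h+\varepsilon w)\geq 0$ before invoking Lemma~\ref{lem:Fh>0}; you instead restrict to $\mathrm{supp}(v)$, invoke the threshold result for the sub-kernel to obtain a nonzero endemic equilibrium there, extend it by zero and apply Lemma~\ref{lem:Fh>0}. Both work; yours leans on a slightly heavier cited fact (existence of a nonzero equilibrium in the supercritical case) but avoids the explicit perturbative computation of $F(h+\varepsilon w)$, and as you can check the kernel-vanishing condition $k|_{\tilde\Omega\times\Omega_+}=0$ you single out as crucial is actually not needed: the zero-extension $\tilde g$ satisfies $F(\tilde g)\geq F_{\tilde k}(\tilde g)=0$ automatically because deleting part of the kernel only decreases $\Tinf_k$, and the lower bound $\rho(\Tinf_{\ind{\tilde\Omega}\kkk\ind{\tilde\Omega}})\geq R_e(1-\mathfrak{g})$ already follows from $v$ being supported on $\tilde\Omega\subseteq\{\mathfrak{g}=0\}$.

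One justification you should repair: you claim \ref{s(1-h)<0}$\Rightarrow$\ref{s(g)<0} directly ``by monotonicity of the spectral bound in the zero-order term, since $\gamma/(1-h)\geq\gamma$''. But the first-order parts of the two linearizations also differ --- $DF[h]$ contains $M_{1-h}\Tinf_k$ (multiplication on the left) while $DF_{1-h}[\zero]$ contains $\Tinf_kM_{1-h}$ (multiplication on the right) --- so the two operators are not ordered and a zero-order comparison alone does not apply. Since you have already established \ref{s(g)<0}$\Leftrightarrow$\ref{Rh2<1} and \ref{s(1-h)<0}$\Leftrightarrow$\ref{Rh<1}, the clean route (and the paper's) is to prove \ref{Rh<1}$\Rightarrow$\ref{Rh2<1} from $(1-h)^2\leq 1-h$ and the kernel monotonicity~\eqref{eq:k>k}.
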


\begin{remark}[On stability]\label{rem:stab}
  From a dynamical systems point of view, this proposition links together two different
  stability properties.  The (classically equivalent) conditions~\ref{s(g)<0}
  and~\ref{Rh2<1} state that for the original dynamics given by~\eqref{eq:SIS2} with
  vector field~$F$, the equilibrium $h$ is not linearly unstable.  Similarly,
  conditions~\ref{s(1-h)<0} and \ref{Rh<1} both state that in the vaccinated dynamics
  given by the modified vector field $F_{1-h}$ defined by~\eqref{eq:vec-field-vaccin}, the
  disease-free equilibrium $0$ is not linearly unstable.

  In particular, in the original dynamics given by~\eqref{eq:SIS2}, equilibria that are
  not maximal are necessarily linearly unstable.
\end{remark}

\begin{proof}
  Let $h \in \Delta$ be an equilibrium, that is $F(h)=0$.

  \medskip 

  The equivalence between~\ref{s(1-h)<0} and \ref{Rh<1} is a direct consequence
  of~\cite[Proposition~4.2]{delmas_infinite-dimensional_2020}.
  
  Let us show the equivalence between \ref{s(g)<0} and \ref{Rh2<1}. According to
  the same~\cite[Proposition~4.2]{delmas_infinite-dimensional_2020}, $s(DF[ h]) \leq 0$ if and only
  if:
  \[
    \rho\left(\Tinf_\kk\right) \leq 1 \quad \text{with} \quad
    \kk(x,y) = (1 - h(x))\frac{ k(x,y) }{\gamma(y) + \Tinf_k(h)(y)}\cdot
  \]
  Since $F(h)=0$, we have $(1-h)/\gamma= 1/(\gamma+ \Tinf_k(h))$. This gives:
  \begin{equation}\label{eq:Tk-Fh}
    \mathsf{k}(x,y) = (1 - h(x))\frac{ k(x,y) (1-h(y))}{\gamma(y)}
  \end{equation}
  and thus $\Tinf_\kk = M_{1-h}\, \Tinf_{k/\gamma} \, M_{1-h}$, where $M_f$ is the
  multiplication operator by $f$. Recall the definition~\eqref{eq:def-R_e} of $R_e$.
 We deduce from~\eqref{eq:AB=BA} that:
  \begin{equation}\label{eq:commute} \rho\left(\Tinf_\kk\right) =
    \rho\left(\Tinf_{k/\gamma} M_{(1-h)^2} \right) = R_e((1-h)^2).
  \end{equation}
  This gives the equivalence between \ref{s(g)<0} and \ref{Rh2<1}.

  \medskip

  We prove that \ref{h=g} implies \ref{Rh<1}. Suppose that $R_e(1-h)>1$.  Thanks
  to~~\eqref{eq:AB=BA}, 
  we have $\rho(M_{1-h} \Tinf_{k/\gamma})=
  \rho(\Tinf_{k/\gamma}M_{1-h})= R_e(1-h)>1$. According to
  \cite[Lemma~3.7~(v)]{delmas_infinite-dimensional_2020}, there exists $v\in L^q_+
  \setminus \{ 0 \}$ a left Perron eigenfunction of $\Tinf_{(1-h)k/\gamma}$, that is $
  \Tinf_{(1-h)k/\gamma}^\top(v) = R_e(1-h) v$. Using $F(h)=0$, and thus
  $(1-h) \Tinf_k(h)=\gamma h$, for the last equality, we have:
  \begin{equation*}
    R_e(1-h) \braket{v,\gamma h} = \braket{v,
    (1-h)\Tinf_{k/\gamma}(\gamma h)} = \braket{v, \gamma h}.
  \end{equation*}
  We get $\braket{v,\gamma h}=0 $ and thus $\braket{v,\ind{A}} = 0$, where
  $A=\set{h>0}$ denote the support of the function $h$. Since $
  \Tinf_{(1-h)k/\gamma}^\top(v) =
  R_e(1-h) v$ and setting $v'=(1-h)v$ (so that $v'=v$ $\mu$-almost surely on $A^c$), we
  deduce that:
  \begin{equation*}
    \Tinf_{k'/\gamma}^\top(v') = R_e(1-h) v',
  \end{equation*}
  where    $k'=\ind{A^c}\,   k\,    \ind{A^c}$.   This    implies   that
  $\rho(  \Tinf_{k'/\gamma})\geq   R_e(1-h)$.  Since   $k'=(1-h)k'$  and
  $k-k'\geq 0$,  we get that $\Tinf_{k/\gamma}-  \Tinf_{k'/\gamma}$ is a
  positive          operator.    Using~\eqref{eq:k>k} for the
  inequality as $(1-h) k'/\gamma\leq  (1-h) k/\gamma$, we          deduce          that
  $ \rho(\Tinf_{k'/\gamma})= \rho ( M_{1-h} \Tinf_{k'/\gamma}) \leq \rho
  (  M_{1-h}\Tinf_{k/\gamma})= R_e(1-h)$.   Thus,  the
  spectral    radius     of    $\Tinf_{k'/\gamma}$    is     equal    to
  $R_e(1-h)$.                        According                        to
  \cite[Proposition~4.2]{delmas_infinite-dimensional_2020},        since
  $\rho(\Tinf_{k'/\gamma})>1$,                there               exists
  $w  \in \mathscr{L}^\infty_+\setminus  \{ 0  \}$ and  $\lambda>0$ such
  that:
  \begin{equation*}
    \Tinf_{k'}(w) - \gamma w = \lambda w.
  \end{equation*}
  This also implies that $w=0$ on $A=\set{h>0}$, that is $wh=0$ and thus $w \Tinf_k(h)=0$
  as $\Tinf_k (h) =\gamma h/(1-h)$. Using that $F(h)=0$,
  $\Tinf_k(w)=\Tinf_{k'}(w)=(\gamma+\lambda) w$ and $h\Tinf_k(w)=0$, we obtain:
  \begin{equation*}
    F(h+w) = w(\lambda - \Tinf_k(w)) .
  \end{equation*}
  Taking $\varepsilon>0$ small enough so that $\varepsilon \Tinf_{k }(w) \leq \lambda/2$
  and $\varepsilon w \leq 1$, we get $h+\varepsilon w\in
  \Delta$ and $ F( h +\varepsilon
  w) \geq 0$. Then use
  Lemma \ref{lem:Fh>0} to deduce that $h+\varepsilon w \leq \mathfrak{g}$ and thus
  $h \neq \mathfrak{g} $.

  \medskip

  To   see   that   \ref{Rh<1}    implies   \ref{Rh2<1},   notice   that
  $(1-h)\geq  (1-h)^2$,  and  then  use~\eqref{eq:k>k}  to  deduce  that
  $\rho(\Tinf_{\kkk  (1-h)})\geq  \rho(\Tinf_{\kkk (1-h)^2})$  and  thus
  $ R_e(1-h)\geq R_e((1-h)^2)$.

  \medskip

  We prove that \ref{Rh2<1} implies \ref{h=g}. Notice that $F(g)=0$ and $g\in \Delta$
  implies that $g<1$. Assume that $h \neq \mathfrak{g}$. Notice that $\gamma/(1-h)= \gamma
  + \Tinf_{k}(h)$, so that $\gamma (\mathfrak{g} - h)/(1 - h) \in \cl_+$. An elementary
  computation, using $F(h)=F(\mathfrak{g})=0$ and $\kk$ defined in~\eqref{eq:Tk-Fh},
  gives:
  \[
    \Tinf_\kk \left( \gamma \frac{\mathfrak{g} - h}{1 - h}\right)
    = (1-h)\Tinf_{k}\left(\mathfrak{g} - h \right)
    = \gamma
    \frac{\mathfrak{g} - h}{1 - \mathfrak{g}}
    =\frac{1-h}{1 - \mathfrak{g}} \, \gamma
    \frac{\mathfrak{g} - h}{1 - h} \cdot
  \]
  Since $h \neq \mathfrak{g}$ and $h\leq \mathfrak{g}$, we deduce that $(1-h)/ (1 -
  \mathfrak{g})\geq 1$, with strict inequality on $\set{ \mathfrak{g}-h>0}$ which is a set
  of positive measure. We deduce from Lemma~\ref{lem:prelim-result} (with $k$ replaced by
  $\kk \gamma$) that $\rho\left(\Tinf_{\mathsf{k}}\right)>1$. Then use~\eqref{eq:commute}
  to conclude.

  \medskip

  To conclude notice that $\mathfrak{g}=0 \Longleftrightarrow R_0\leq 1$ is a consequence
  of the equivalence between \ref{h=g} and \ref{Rh<1} with $h=0$ and $R_0=R_e(\un)$.

  Using that $F(\mathfrak{g})=0$, we get $\Tinf_k(\mathfrak{g})= \gamma
  \mathfrak{g}/(1-\mathfrak{g})$. We deduce that
  $\Tinf_{k(1-\mathfrak{g})/\gamma}(\Tinf_k(\mathfrak{g})) = \Tinf_k(\mathfrak{g})$. If
  $\mathfrak{g}\neq 0$, we get $\Tinf_k(\mathfrak{g})\neq 0$ (on a set of positive
  $\mu$-measure). This implies that $R_e(1-\mathfrak{g})\geq 1$. Then use \ref{Rh<1} to
  deduce that $R_e(1-\mathfrak{g})=1$ if $\mathfrak{g}\neq 0$.
\end{proof}

\section*{Acknowledgments}

This work is partially supported by Labex Bézout reference ANR-10-LABX-58.

\printbibliography

\end{document}